\theoremstyle{definition}
\newtheorem{theorem}{Theorem}[section]
\newtheorem{lemma}[theorem]{Lemma}
\newtheorem{proposition}[theorem]{Proposition}
\newtheorem{corollary}[theorem]{Corollary}
\theoremstyle{definition}
\newtheorem{example}[theorem]{Example}
\theoremstyle{remark}
\theoremstyle{definition}
\newcounter{cnt}
\def\mydggeometry{\makeatletter\dg@YGRID=1\dg@XGRID=20\unitlength=0.003pt\makeatother}
\makeatother \theoremstyle{remark}
\numberwithin{equation}{section}
\let\bwdg\bigwedge
\def\bigwedge{{\textstyle\bwdg}}
\newcommand{\nc}{\newcommand}
\newcommand{\rnc}{\renewcommand}
\nc{\cal}{\mathcal} \nc{\goth}{\mathfrak} \rnc{\bold}{\mathbf}
\nc\bomega{{\mbox{\boldmath $\omega$}}} \nc\bpsi{{\mbox{\boldmath $\Psi$}}}
 \nc\balpha{{\mbox{\boldmath $\alpha$}}}
 \nc\bpi{{\mbox{\boldmath $\pi$}}}
 \nc\bvpi{{\mbox{\boldmath $\varpi$}}}
\nc\chara{\operatorname{ch}}
  \nc\bxi{{\mbox{\boldmath $\xi$}}}
\nc\bmu{{\mbox{\boldmath $\mu$}}} \nc\bcN{{\mbox{\boldmath $\cal{N}$}}} \nc\bcm{{\mbox{\boldmath $\cal{M}$}}} \nc\blambda{{\mbox{\boldmath
$\lambda$}}}\nc\bnu{{\mbox{\boldmath $\nu$}}}
\def\section{\def\@secnumfont{\mdseries}\@startsection{section}{1}%
  \z@{.7\linespacing\@plus\linespacing}{.5\linespacing}%
  {\normalfont\scshape\centering}}
\def\subsection{\def\@secnumfont{\bfseries}\@startsection{subsection}{2}%
  {\parindent}{.5\linespacing\@plus.7\linespacing}{-.5em}%
  {\normalfont\bfseries}}
 \nc{\Hom}{\operatorname{Hom}}
  \nc{\mode}{\operatorname{mod}}
\nc{\End}{\operatorname{End}} \nc{\wh}[1]{\widehat{#1}} \nc{\Ext}{\operatorname{Ext}} \nc{\ch}{\text{ch}} \nc{\ev}{\operatorname{ev}}
\nc{\Ob}{\operatorname{Ob}} \nc{\soc}{\operatorname{soc}} \nc{\rad}{\operatorname{rad}} \nc{\head}{\operatorname{head}}
 \nc{\Cal}{\cal} \nc{\Xp}[1]{X^+(#1)} \nc{\Xm}[1]{X^-(#1)}
\nc{\on}{\operatorname} \nc{\Z}{{\bold Z}} \nc{\J}{{\cal J}}  \nc{\Q}{{\bold Q}}
\nc{\N}{{\bold N}}  \nc\boa{\bold a} \nc\bob{\bold b} \nc\boc{\bold c} \nc\bod{\bold d} \nc\boe{\bold e} \nc\bof{\bold f} \nc\bog{\bold g}
\nc\boh{\bold h} \nc\boi{\bold i} \nc\boj{\bold j} \nc\bok{\bold k} \nc\bol{\bold l} \nc\bom{\bold m} \nc\bon{\mathbb n} \nc\boo{\bold o}
\nc\bop{\bold p} \nc\boq{\bold q} \nc\bor{\bold r} \nc\bos{\bold s} \nc\boT{\bold t} \nc\boF{\bold F} \nc\bou{\bold u} \nc\bov{\bold v}
\nc\bow{\bold w} \nc\boz{\bold z}\nc\ba{\bold A} \nc\bb{\bold B} \nc\bc{\mathbb C} \nc\bd{\bold D} \nc\be{\bold E} \nc\bg{\bold
G} \nc\bh{\bold H} \nc\bi{\bold I} \nc\bj{\bold J} \nc\bk{\bold K} \nc\bl{\bold L} \nc\bm{\bold M} \nc\bn{\mathbb N} \nc\bo{\bold O} \nc\bp{\bold
P} \nc\bq{\bold Q} \nc\br{\bold R} \nc\bs{\bold S} \nc\bt{\bold T} \nc\bu{\bold U} \nc\bv{\bold V} \nc\bw{\bold W} \nc\bz{\mathbb Z} \nc\bx{\bold
x} \nc\KR{\bold{KR}} \nc\rk{\bold{rk}} \nc\het{\text{ht }}
\nc\toa{\tilde a} \nc\tob{\tilde b} \nc\toc{\tilde c} \nc\tod{\tilde d} \nc\toe{\tilde e} \nc\tof{\tilde f} \nc\tog{\tilde g} \nc\toh{\tilde h}
\nc\toi{\tilde i} \nc\toj{\tilde j} \nc\tok{\tilde k} \nc\tol{\tilde l} \nc\tom{\tilde m} \nc\ton{\tilde n} \nc\too{\tilde o} \nc\toq{\tilde q}
\nc\tor{\tilde r} \nc\tos{\tilde s} \nc\toT{\tilde t} \nc\tou{\tilde u} \nc\tov{\tilde v} \nc\tow{\tilde w} \nc\toz{\tilde z} \nc\woi{w_{\omega_i}}
\begin{document}
\setcounter{section}{0}
\setcounter{tocdepth}{1}


\title{On power basis of a class of number fields}

\author[Anuj Jakhar]{Anuj Jakhar}
\author[Sumandeep Kaur]{Sumandeep Kaur}
\author[Surender Kumar]{Surender Kumar}
\address[Anuj Jakhar]{Department of Mathematics, Indian Institute of Technology (IIT) Madras}
\address[Sumandeep Kaur]{Department of Mathematics, Panjab University Chandigarh}
\address[Surender Kumar]{Department of Mathematics, Indian Institute of Technology (IIT) Bhilai}

\email[Anuj Jakhar]{anujjakhar@iitm.ac.in \\ anujiisermohali@gmail.com}
\email[Sumandeep Kaur]{sumandhunay@gmail.com}
\email[Surender Kumar]{surenderk@iitbhilai.ac.in}


\subjclass [2010]{11R04; 11R29.}
\keywords{Ring of algebraic integers; Integral basis and discriminant; Monogenic number fields.
}

\begin{abstract}
\noindent  Let $f(x)=x^n+ax^2+bx+c \in \Z[x]$ be an irreducible polynomial with $b^2=4ac$ and let $K=\Q(\theta)$ be an algebraic number field defined by a complex root $\theta$ of $f(x)$. Let $\Z_K$ deonote the ring of algebraic integers of $K$. The aim of this paper is to provide the necessary and sufficient conditions involving only $a,c$ and $n$ for a given prime $p$ to divide the index of the subgroup $\Z[\theta]$ in $\Z_K$. As a consequence, we provide  families of monogenic algebraic number fields. Further, when $\Z_K \neq \Z[\theta]$, we determine explicitly the index $[\Z_K : \Z[\theta]]$ in some cases.
\end{abstract}
\maketitle

\section{Introduction and statements of results}\label{intro}
An algebraic number field $K$ of degree $n$ is said to be monogenic if it possess an integral basis of the form $\{1, \alpha, \alpha^2,\cdots,\alpha^{n-1}\}$ for some algebraic integer $\alpha.$ Such an integral basis of $K$ is called a power basis of $K$. The determination of monogenity of an algebraic number field is one of the classical problem in algebraic number theory. Hasse put forth the problem of characterising monogenic fields, and it is still an active field of research. Despite the lack of a standard classification, there has been some progress in this direction. Ga\'al and Gr\'as have done extensive research on the idea of monogenic fields. Results relevant to lower degree monogenic extensions can be found in Ga\'al's book \cite{gaal}. Gr\'as demonstrates in \cite{gras}  that for $n$ relatively prime to 6, there are only finitely many abelian monogenic
fields of degree $n$. The discriminant $d_K$ of an algebraic number field $K=\Q(\theta)$ and  the index of $\Z[\theta]$ in ring of integers $\Z_K$ are closely connected  by the well-known formula $D_f=[\Z_K:\mathbb Z[\theta]]^2d_K;$
where $D_f$ is the discriminant of the minimal polynomial $f(x)$ of $\theta$. In 1878,  Dedekind gave a criterion which provides  necessary and sufficient conditions for a prime $p$ to divide the index of $\Z[\theta]$ in $\Z_K$ (see Theorem \ref{dd}). 

 Let $K=\Q(\theta)$ be an algebraic number field where $\theta$ satisfies an irreducible quadrinomial $f(x)= x^{n} +ax^2 +bx+c\in\Z[x]$ with $b^2=4ac$. Then, in this paper using Dedekind's criterion, we characterize all the primes which divide the  group index  $[\Z_K:\Z[\theta]]$. This will provide  necessary and sufficient conditions involving only $a, c, n$ for $\{1, \theta, \cdots, \theta^{n-1}\}$ to be an integral basis of $K$. We also prove that if a prime $p\nmid b$ and divides the discriminant $d_K$ of $K$, then the exact power of $p$ dividing $d_K$ is one. As an application of this result, in Corollary \ref{1.1pc}, we provide an explicit formula for $d_K$ defined by a class of quadrinomials. In the end of the paper, we give examples which generate families of monogenic algebraic number fields. In some examples, we determine the index $[\Z_K:\Z[{\theta}]]$ as well.

Precisely, we prove:
\begin{theorem}\label{1.1}
Let $K=\mathbb Q(\theta)$ be an algebraic number field defiend by a root $\theta$ of an irreducible quadrinomial $f(x) = x^n+ax^2+bx+c \in \Z[x]$  with $b^2=4ac$. A prime $p$  dividing the discriminant $D_f$ of $f(x)$ does not divide $[\Z_K:\Z[\theta]]$ if and only if $p$ satisfies one of the following conditions:
\begin{itemize}
\item[(1)] If  $p\mid a$  and $p\mid c,$ then $p^2\nmid c$.
\item[(2)] If   $p\mid a$ and $p\nmid c$ with $r\geq 1$ as the highest power of $p$ dividing $n$, then either  $p\mid b_1$ and $p\nmid c_1$ or  $p\nmid b_1 [(-c_1)^n+c b_1^n]$ where  $c_1=\frac{{c +(-c)^{p^r}}}{p}$ and $b_1=\frac{b}{p}.$
\item[(3)] If  $p\nmid a$ and $p\mid c$ with $l\geq 1$ as the highest power of $p$ dividing $n-2$, then either $p\mid a_1$ and $p\nmid b_1$ or $p \nmid \bar{a}_1[(-\bar{a})^{2^{1-d}}{\bar{a}_1}^{\frac{n-2}{2^d}}-{(-\bar{b}_1)}^{\frac{n-2}{2^d}}]$ where  $a_1=\frac{{a+ (-a)^{p^l}}}{p}$, $b_1=\frac{b}{p}$ and $d$ equals $1$ or $0$ according as $2$ divides $n-2$ or not.
\item[(4)] If $p=2$  and $p\nmid ac$, then either $a\equiv 1 (\text{mod } 4)$ or $c\equiv 1 (\text{mod } 4)$.
\item[(5)] If $p\nmid b,$ then $p^2\nmid D_f.$
 \end{itemize}
\end{theorem}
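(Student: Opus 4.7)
The plan is to prove the theorem case by case, using Dedekind's criterion (Theorem~\ref{dd}) in cases (1)--(4) and a direct discriminant-valuation argument in case (5). The unifying observation is that $b^2 = 4ac$ rigidly constrains $v_p(b)$ in terms of $v_p(a)$ and $v_p(c)$: whenever $p$ divides either $a$ or $c$ then $p\mid b$, so the tail $ax^2+bx+c$ collapses modulo $p$ to at most $x^2$ times a constant, while for $p\nmid 2a$ the same tail becomes a perfect square in $\F_p[x]$.

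For cases (1)--(3), I would determine $\bar f \pmod p$ explicitly and apply Dedekind. Case (1) yields $\bar f = x^n$ and $M = (ax^2+bx+c)/p$, so the criterion reduces at once to $p^2 \nmid c$. In case (2), $\bar f \equiv x^n + c$, and writing $n = p^r m$ with $p\nmid m$, Frobenius gives $\bar f \equiv (x^m + c)^{p^r}$ with $x^m + c$ separable; take the lift $N = (x^m + c)^{p^r}$, expand by the binomial theorem, keep only those cross-terms with $v_p(k) = r-1$ (the only ones surviving modulo $p^2$), and evaluate $\bar M$ at a root $\zeta$ with $\zeta^m = -c$. The Dedekind gcd condition then collapses to the stated alternative on $a_1,b_1,c_1$. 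Case (3) is dual: $\bar f \equiv x^2(x^m+a)^{p^l}$ with $n-2 = p^l m$, so the Dedekind radical is $x(x^m+a)$, and the case-split $d\in\{0,1\}$ is forced by the separate treatment of $p=2$ in extracting the relevant exponent.

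Case (4) ($p=2$, $a,c$ odd) forces $b\equiv 2\pmod 4$ from $b^2=4ac$. For $n$ odd, $\bar f \equiv x^n + x^2 + 1\pmod 2$ is separable (derivative $x^{n-1}$ is coprime to $\bar f$), so $2\nmid D_f$ and this subcase is vacuous. For $n = 2k$ even, $\bar f \equiv (x^k+x+1)^2\pmod 2$, and with the lift $N = (x^k+x+1)^2$ one computes $\bar M \equiv \tfrac{a-1}{2}x^2 + \tfrac{c-1}{2} + x^{k+1} + x^k \pmod 2$ (the linear coefficient vanishes as $(b-2)/2$ is even). Reducing modulo $x^k+x+1$ using $x^k \equiv x+1$ collapses $\bar M$ to $(\tfrac{a-1}{2}+1)x^2 + (\tfrac{c-1}{2}+1)$, and this is coprime to $x^k+x+1$ in $\F_2[x]$ iff at least one of $\tfrac{a-1}{2},\tfrac{c-1}{2}$ is even, i.e., iff $a\equiv 1$ or $c\equiv 1\pmod 4$.

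The main obstacle is case (5). Here $p\nmid b$ forces $p$ odd and $p\nmid ac$, and $\bar f = x^n + \bar a(x-\alpha)^2$ with $\alpha = -\bar b/(2\bar a)\neq 0$. Eliminating between $\bar f(\beta) = 0$ and $\bar f'(\beta) = 0$ forces any repeated root to satisfy $\beta = n\alpha/(n-2)$; the alternatives $p\mid n$ or $p\mid n-2$ would make $\bar f$ separable and contradict $p\mid D_f$, and one verifies $\bar f''(\beta) = n(n-2)\beta^{n-2}/2 \neq 0$, so the repeated root is unique with multiplicity exactly $2$. Hensel's lemma then splits $f = h_1 h_2$ over $\Z_p$ with $h_1$ a monic quadratic lifting $(x-\beta)^2$ and $\bar h_2$ separable, so $v_p(D_f) = v_p(\mathrm{disc}(h_1))$. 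The delicate step will be to write $h_1 = x^2+ux+v$ with $u = -2\beta + pu_1,\,v = \beta^2 + pv_1$, match $f = h_1 h_2$ modulo $p^2$ to pin down $u_1,v_1$, and invoke $b^2=4ac$ to show $\mathrm{disc}(h_1)/p = -4\beta u_1 - 4v_1 + pu_1^2$ is a $p$-adic unit; this yields $v_p(D_f) = 1$ and hence $p\nmid [\Z_K:\Z[\theta]]$.
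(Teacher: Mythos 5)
Your cases (1)--(4) follow essentially the same route as the paper: reduce $f$ modulo $p$, fix an explicit lift of the factorization, compute $\overline{M}$ and apply Dedekind's criterion; your mod-$2$ computation in case (4) reproduces the content of the paper's Lemma~\ref{lemmanew} almost verbatim. (Your case (3) is too compressed to check, but be warned that the radical you correctly identify contains the factor $x$, which occurs with multiplicity $2$ in $\bar f$, so Dedekind also requires $x \nmid \overline{M}$; since $b^2=4ac$ forces $p^2\mid c$ and $p\mid b$ there, this extra condition is where all the difficulty sits.)

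The genuine gap is case (5). You propose to show that $\operatorname{disc}(h_1)/p$ is a $p$-adic unit and conclude $v_p(D_f)=1$ unconditionally, hence $p\nmid[\Z_K:\Z[\theta]]$. This is false and cannot be repaired: condition (5) of the theorem is a substantive dichotomy, not an automatic consequence of $p\nmid b$. Take the paper's own first example: $f(x)=x^7+2x^2+4x+2$ (so $a=c=2$, $b=4$, $b^2=4ac$) and $p=3\nmid b$. Here $|D_f|=2^6\cdot 3^2\cdot 83\cdot 1069$, so $v_3(D_f)=2$, and $3$ does divide $[\Z_K:\Z[\theta]]$ (the index equals $3$). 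Your Hensel factorization correctly gives $v_p(D_f)=v_p(\operatorname{disc}h_1)$, but this valuation can be any integer $\ge 1$ (it is $2$ in the example, where $h_1$ has two roots congruent mod $p$ but not mod $p^2$), and precisely when it exceeds $1$ the excess is absorbed by the index rather than by $d_K$. So the ``delicate step'' you flag is not delicate but impossible. The paper instead stays inside Dedekind's criterion: it lifts the double root $\bar d$ of $\bar f$ to an integer $d$ with $nbd\equiv -2(n-2)a \pmod{p^2}$, shows $p\nmid[\Z_K:\Z[\theta]]$ if and only if $p^2\nmid f(d)$, and then uses Lemma~\ref{lemma 1}(2) to convert $p^2\mid f(d)$ into $p^2\mid D_f$. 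If you want to keep your factorization argument, you must instead prove the equivalence $p\nmid[\Z_K:\Z[\theta]]$ if and only if $v_p(\operatorname{disc}h_1)=1$, for instance by combining $D_f=[\Z_K:\Z[\theta]]^2 d_K$ with the fact (the paper's Proposition~\ref{1.1p}) that $v_p(d_K)\le 1$ in this case.
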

The following corollaries are  immediate consequence of the above theorem. 
\begin{corollary}\label{cor}

Let $K=\Q(\theta)$, $f(x)$ be as in the above theorem. Then $\{1,\theta,\cdots,\theta^{n-1}\}$ is an integral basis of $K$ if and only  if each  prime $p$ dividing  $D_f$  satisfies one of the conditions $(1)-(5)$ of Theorem \ref{1.1}
\end{corollary}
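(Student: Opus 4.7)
The plan is to deduce this corollary as a direct application of Theorem~\ref{1.1} together with the classical discriminant-index relation
\[
D_f = [\Z_K : \Z[\theta]]^2 \, d_K.
\]

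First I would observe that $\{1,\theta,\ldots,\theta^{n-1}\}$ is an integral basis of $K$ if and only if $\Z[\theta]=\Z_K$, which is equivalent to $[\Z_K:\Z[\theta]]=1$, i.e.\ to no rational prime $p$ dividing the index $[\Z_K:\Z[\theta]]$. So the corollary amounts to showing: no prime divides the index if and only if every prime dividing $D_f$ satisfies one of (1)--(5).

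For the nontrivial direction, I would split primes into two classes. If $p \nmid D_f$, then by the discriminant-index formula $p$ cannot divide $[\Z_K:\Z[\theta]]$ (since if it did, $p^2$ would divide $D_f$), so such primes impose no condition. If $p \mid D_f$, then by Theorem~\ref{1.1} the condition ``$p$ does not divide $[\Z_K:\Z[\theta]]$'' is equivalent to $p$ satisfying one of (1)--(5). Quantifying over all primes and combining these two observations yields both directions of the ``if and only if.''

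Since Theorem~\ref{1.1} already supplies the delicate prime-by-prime criterion, the only real content of the corollary is the reduction to primes dividing $D_f$; this is not an obstacle but a one-line consequence of the index-discriminant formula. Hence the proof of Corollary~\ref{cor} should occupy only a few lines and require no further computation.
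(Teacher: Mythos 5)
Your proof is correct and matches the paper's intent exactly: the paper gives no explicit argument, calling the corollary an immediate consequence of Theorem \ref{1.1}, and your reduction via $D_f=[\Z_K:\Z[\theta]]^2 d_K$ (so that primes not dividing $D_f$ cannot divide the index) plus the prime-by-prime equivalence of Theorem \ref{1.1} is precisely the intended one-line deduction.
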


The following result proved in \cite[Theorem 1.3]{jh-k}  can be quickly deduced from Corollary \ref{cor}.
\begin{corollary}\label{cor2}
Let $K = \Q(\theta)$ be an algebraic number field with $\theta$ a root of an irreducible polynomial $x^n-c\in\Z[x]$.
Then $\{1,\theta,\cdots,\theta^{n-1}\}$ is an integral basis of $K$ if and only  if
 $c$ is a square free integer and whenever $r\geq 1$ is the highest power of a prime $p$
dividing $n$, $p$ not dividing $c$, then $p^2$
 does not divide $c^{p^r}-c.$
\end{corollary}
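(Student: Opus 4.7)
The plan is to recognize that $f(x) = x^n - c$ is a degenerate instance of the quadrinomial framework of Theorem~\ref{1.1}, namely the case where $a = 0$, $b = 0$, and the constant term equals $-c$. The hypothesis $b^2 = 4ac$ then becomes $0 = 0$ and is automatically satisfied, so Corollary~\ref{cor} is applicable. Since the discriminant of $x^n - c$ is $\pm n^n c^{n-1}$, the primes dividing $D_f$ are precisely those dividing $nc$, and by Corollary~\ref{cor} it suffices to verify, for each such prime, which of the five conditions of Theorem~\ref{1.1} can be invoked.

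Next I would run through clauses (1)--(5) under the specialization $a = 0$, $b = 0$. Clauses (3) and (4) both require $p \nmid a$, which fails, and clause (5) requires $p \nmid b$, which also fails; so these three are vacuous and contribute nothing. The surviving clauses are (1), which now applies to every prime $p$ dividing $c$ and reduces to $p^2 \nmid -c$, i.e.\ $p^2 \nmid c$; and (2), which applies to every prime $p$ dividing $n$ with $p \nmid c$. Letting $p$ range over the primes dividing $c$, clause (1) is equivalent to the squarefreeness of $c$.

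For the translation of clause (2), I would compute $b_1 = b/p = 0$ and $c_1 = (-c + (-(-c))^{p^r})/p = (c^{p^r} - c)/p$, using that $-(-c) = c$ so there is no parity issue with the exponent $p^r$. The disjunction ``$(p \mid b_1 \text{ and } p \nmid c_1)$ or $(p \nmid b_1[(-c_1)^n + c b_1^n])$'' then simplifies: the second disjunct collapses because $b_1 = 0$ forces $p \mid b_1[(-c_1)^n + c b_1^n]$, and $p \mid b_1$ is automatic, so the surviving requirement is $p \nmid c_1$, that is, $p^2 \nmid c^{p^r} - c$. This is exactly the second condition stated in Corollary~\ref{cor2}, and combining with the squarefreeness pulled from clause (1), the equivalence follows from Corollary~\ref{cor}.

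There is no substantive obstacle here beyond careful bookkeeping: one must remember that the ``$c$'' appearing in Theorem~\ref{1.1} is the constant term of $f$, hence equal to $-c$ for $x^n - c$, and track this sign through $c_1$ and through clause (1). Once this translation is performed correctly, the two conditions of Corollary~\ref{cor2} match verbatim what Theorem~\ref{1.1} demands for the primes $p \mid nc$.
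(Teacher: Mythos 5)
Your proposal is correct and is exactly the deduction the paper intends: the paper merely asserts that Corollary~\ref{cor2} ``can be quickly deduced from Corollary~\ref{cor},'' and your specialization $a=b=0$ (constant term $-c$), with the check that clauses (3)--(5) are vacuous, clause (1) gives squarefreeness of $c$, and clause (2) collapses to $p^2\nmid c^{p^r}-c$, is the intended route with the sign bookkeeping done correctly.
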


\begin{corollary}\label{cor3} Let $f(x) = x^n+ax^2+bx+c \in \Z[x]$ be a monic polynomial of degree $n$ where $c\neq \pm 1$ is squarefree. Assume that the sets of primes dividing $a,b,c$ are the same. Let either $2|c$ or $a\equiv 1 (\text{mod } 4)$ or $c\equiv 1 (\text{mod } 4)$. Let $K = \Q(\theta)$ with $\theta$ a root of $f(x)$. Then $\Z_K =\Z[\theta]$ if and only if for each prime $p$ dividing $D_f$ and not dividing $a$, $p^2$ does not divide $D_f$.\end{corollary}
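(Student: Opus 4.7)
The plan is to invoke Corollary \ref{cor} and check, for each prime $p$ dividing $D_f$, that one of the conditions (1)-(5) of Theorem \ref{1.1} is satisfied. The key simplification, which I expect to collapse the five-part criterion to the single condition stated in the corollary, is that under the same-primes hypothesis every prime $p$ dividing $D_f$ either divides all of $a,b,c$ or divides none of them.

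I would begin with a small preliminary observation: from $b^2=4ac$ the integer $b$ must be even, and the same-primes hypothesis then forces $2\mid a$ and $2\mid c$. So $p=2$ always lies in the class of primes dividing $a$, and in particular the mod-$4$ alternatives in the hypothesis are really a safety net: once $b^2=4ac$ is imposed together with the same-primes assumption, only the ``$2\mid c$'' branch actually arises. For any prime $p\mid a$, the hypothesis gives $p\mid b$ and $p\mid c$; since $c$ is squarefree, $p^2\nmid c$, and therefore condition (1) of Theorem \ref{1.1} is in force. Hence every such $p$ (including $p=2$) is harmless for the index, and these primes contribute no constraint.

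For the remaining primes, let $p$ be a prime dividing $D_f$ with $p\nmid a$; necessarily $p$ is odd, and the same-primes hypothesis gives $p\nmid b$ and $p\nmid c$. Among the five conditions of Theorem \ref{1.1}, (1)-(3) each presuppose $p\mid a$ or $p\mid c$, and (4) presupposes $p=2$, so none of them can apply. Only condition (5) remains, and since $p\nmid b$ it certifies $p$ as harmless precisely when $p^2\nmid D_f$. Combining the two cases via Corollary \ref{cor}, one obtains $\Z_K=\Z[\theta]$ if and only if every prime $p\mid D_f$ with $p\nmid a$ satisfies $p^2\nmid D_f$, which is the claimed equivalence.

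There is no serious technical obstacle; the argument is essentially a careful bookkeeping exercise. The main thing to verify cleanly is that the same-primes hypothesis together with $b^2=4ac$ already handles $p=2$ through condition (1) and rules out the applicability of conditions (2), (3), and (4) for all remaining primes, so that the entire content of the equivalence is carried by condition (5).
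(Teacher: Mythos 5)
Your argument is correct and is exactly the bookkeeping the paper intends (it gives no separate proof, presenting the corollary as an immediate consequence of Theorem \ref{1.1} via Corollary \ref{cor}): the same-primes hypothesis together with $b^2=4ac$ forces every prime dividing $D_f$ into case (1) when $p\mid a$ (harmless because $c$ is squarefree) or case (5) when $p\nmid a$, and your observation that $2\mid b$ then forces $2\mid a$ and $2\mid c$, rendering the mod-$4$ alternatives vacuous, is also accurate. The only caveat is that, as you did, one must read the hypotheses $b^2=4ac$ and irreducibility of $f$ into the corollary's statement, since Theorem \ref{1.1} requires them even though the corollary omits them.
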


We now state the following result which shall be proved after the proof of Theorem \ref{1.1}. It is of independent interest as well.

\begin{proposition}\label{1.1p}
Let $K=\mathbb Q(\theta)$ be an algebraic number field where $\theta$ satisfies an irreducible quadrinomial $f(x) = x^n+ax^{2}+bx+c\in\Z[x]$ of degree $n \geq 3$  with $b^2 = 4ac$. If $p$ is a prime number such that $p\nmid b$ and $p$ divides the discriminant $d_K$ of $K$, then the exact power of $p$ dividing the discriminant $d_K$ of $K$ is one.
\end{proposition}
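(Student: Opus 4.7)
The plan is to derive the proposition as a short corollary of item (5) of Theorem \ref{1.1} combined with the standard discriminant--index relation. The starting point is the identity
\[
D_f \;=\; [\Z_K:\Z[\theta]]^{2}\, d_K,
\]
so every prime divisor of $d_K$ is also a divisor of $D_f$, and moreover only even $p$-adic valuations are contributed by the squared index. Consequently, in order to conclude that $p$ divides $d_K$ to exactly the first power, it is enough to establish $p\parallel D_f$.

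The remaining assertion $p\nmid b \Rightarrow p^{2}\nmid D_f$ is precisely the content of item (5) of Theorem \ref{1.1}. Since the hypothesis $p\mid d_K$ forces $p\mid D_f$, that item applies and yields $p^{2}\nmid D_f$, that is, $v_{p}(D_f)=1$. Transporting this back through the discriminant--index relation forces $v_{p}(d_K)=1$ as well, which is the desired conclusion.

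The substantive work is thus entirely encapsulated in Theorem \ref{1.1}(5); the present proposition is essentially a repackaging of that inequality through the discriminant--index bridge. The main obstacle, should one wish to prove $v_{p}(D_f)\le 1$ from scratch under the hypothesis $p\nmid b$, lies in exploiting the special structure $b^{2}=4ac$, conveniently via the identity
\[
4a\,f(x) \;=\; 4ax^{n} + (2ax+b)^{2}.
\]
This identity constrains any multiple root $\alpha$ of $\bar{f} \bmod p$ to satisfy $n^{2}\alpha^{n-2}\equiv -4a \pmod{p}$, placing the double roots in a single tightly controlled orbit of multiplicity exactly two; a short resultant computation then gives $v_{p}(D_f)\le 1$, matching what is needed.
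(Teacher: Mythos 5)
There is a genuine gap, and it comes from a misreading of Theorem \ref{1.1}(5). That item does \emph{not} assert the unconditional implication ``$p\nmid b \Rightarrow p^2\nmid D_f$''; it says that for a prime $p\mid D_f$ with $p\nmid b$, the condition $p^2\nmid D_f$ is equivalent to $p\nmid[\Z_K:\Z[\theta]]$. In general $p^2\mid D_f$ can certainly occur with $p\nmid b$ --- in that case $p$ simply divides the index. The paper's own Example 1 refutes your key step: for $f(x)=x^7+7(x^2+2x+1)$ one has $b=14$, $|D_f|=7^7\cdot 11^3\cdot 79$, so $11\nmid b$ yet $v_{11}(D_f)=3$ (and there $[\Z_K:\Z[\theta]]=11$, $v_{11}(d_K)=1$). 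Consequently your reduction ``it is enough to establish $p\parallel D_f$'' cannot be carried out, and the closing sketch claiming a resultant computation gives $v_p(D_f)\le 1$ is aiming at a false statement. Knowing that the repeated root of $\bar f$ modulo $p$ is unique and of multiplicity exactly two only bounds $v_p(\bar f\text{'s repeated part})$, not $v_p(D_f)$: the double root can lift badly (i.e.\ $p^2\mid f(d)$), which inflates $D_f$ while the excess is absorbed by the squared index, not by $d_K$.

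The actual proof must therefore work with $d_K$ directly rather than with $D_f$. The paper does this by showing (as in the last case of the proof of Theorem \ref{1.1}) that $p\neq 2$, $p\nmid n(n-2)$, and $\bar f(x)=(x-\bar d)^2\prod_{i=1}^{t}\bar g_i(x)$ with $x-\bar d,\bar g_1,\dots,\bar g_t$ distinct irreducibles; Hensel's lemma then yields $p\Z_K=\mathfrak p\,\wp_1\cdots\wp_t$ with $\mathfrak p=\wp^2$, $N_{K/\Q}(\wp)=p$, and since $p$ is odd the ramification is tame, so Theorem \ref{nana} gives $v_p(d_K)=\sum_i f_i(e_i-1)=1$. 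Your observation that the multiple root is unique and of multiplicity two (via $4af(x)=4ax^n+(2ax+b)^2$, which is a correct identity) is the right first step, but it must be followed by this ramification-theoretic computation of $v_p(d_K)$, not by an attempted bound on $v_p(D_f)$.
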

Using the fact
 that $D_f = [\Z_K : \Z[\theta]]^2d_K$, the above proposition and Theorem \ref{1.1} quickly yield the following explicit formula for the absolute value of $d_K$.
\begin{corollary}\label{1.1pc}
Let $K=\mathbb Q(\theta)$, $f(x)$ be as in Theorem \ref{1.1} and $|D_f| = \prod\limits_{i=1}^{k}p_i^{r_i}\prod\limits_{j=1}^{l}q_j^{t_j}$ be the prime factorization of $|D_f|$ into distinct prime numbers such that $p_i|b$ and $q_j\nmid b$. If all the primes $p_i$'s satisfy one of the conditions $(1)-(5)$ of Theorem \ref{1.1}, then the absolute value of the discriminant $d_K$ is given by 
$$|d_K| = \prod\limits_{i=1}^{k}p_i^{r_i}\prod\limits_{j=1}^{l}q_j^{\frac{1-(-1)^{t_j}}{2}}.$$
\end{corollary}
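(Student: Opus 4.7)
The plan is to leverage the standard discriminant--index relation $D_f = [\Z_K:\Z[\theta]]^2\, d_K$ and to read off the $p$-adic valuation of $d_K$ separately for primes dividing $b$ and primes not dividing $b$. Since $d_K \mid D_f$, only the primes $p_i$ and $q_j$ appearing in $|D_f|$ can occur in $|d_K|$, so it suffices to compute $v_{p_i}(d_K)$ and $v_{q_j}(d_K)$.

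First I would fix a prime $p_i \mid b$ with $v_{p_i}(|D_f|) = r_i$. By hypothesis $p_i$ satisfies one of the conditions (1)--(5) of Theorem \ref{1.1}, and that theorem therefore yields $p_i \nmid [\Z_K : \Z[\theta]]$. Substituting into the discriminant--index formula gives $v_{p_i}(d_K) = v_{p_i}(D_f) = r_i$, so each factor $p_i^{r_i}$ contributes in full to $|d_K|$.

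Next I would handle a prime $q_j \nmid b$ with $v_{q_j}(|D_f|) = t_j \geq 1$. Proposition \ref{1.1p} forces $v_{q_j}(d_K) \in \{0,1\}$. Taking $q_j$-adic valuations in $D_f = [\Z_K:\Z[\theta]]^2\, d_K$ produces
$$t_j \;=\; 2\, v_{q_j}\!\bigl([\Z_K:\Z[\theta]]\bigr) + v_{q_j}(d_K),$$
so $v_{q_j}(d_K) \equiv t_j \pmod 2$. Combined with the bound $v_{q_j}(d_K) \le 1$, this pins down $v_{q_j}(d_K) = 0$ when $t_j$ is even and $v_{q_j}(d_K) = 1$ when $t_j$ is odd, which is precisely $\frac{1-(-1)^{t_j}}{2}$. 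Multiplying the local contributions then gives the claimed formula for $|d_K|$.

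There is no genuine obstacle; the corollary is a bookkeeping consequence of the two results cited. The only step that requires a moment's care is the parity argument for the primes $q_j$, where one must invoke Proposition \ref{1.1p} to know that $v_{q_j}(d_K)$ can be at most $1$ before using the congruence $v_{q_j}(d_K) \equiv t_j \pmod 2$ to decide between $0$ and $1$.
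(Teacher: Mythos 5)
Your proof is correct and follows exactly the route the paper intends (the paper only sketches it in the sentence preceding the corollary): Theorem \ref{1.1} gives $v_{p_i}([\Z_K:\Z[\theta]])=0$ so $v_{p_i}(d_K)=r_i$, while for $q_j\nmid b$ Proposition \ref{1.1p} bounds $v_{q_j}(d_K)\le 1$ and the relation $D_f=[\Z_K:\Z[\theta]]^2 d_K$ fixes its parity. Nothing is missing.
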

\section{Preliminary Results.}
\indent For two polynomials $f(x),g(x)$; $R(f,g)$ will denote the resultant of $f(x)$ and $g(x)$. Now we state some already known  properties of the resultant.
\begin{lemma}\label{lemma a} Let $f(x)=a_mx^m+a_{m-1}x^{m-1}+\cdots+a_0$ and $g(x)=b_nx^n+b_{n-1}x^{n-1}+\cdots+b_0$  be two polynomials with integer coefficients of degree $m$ and  $n$, respectively. Then\\
	{\bf (1)} $R(f,g)=(-1)^{mn}R(g,f).$\\
	{\bf (2)} If $f(x)=g(x)q(x)+r(x),$ then $$R(f,g)=(b_n)^{m-k}R(r,g)$$
	where  $k=\deg r(x).$\\
	{\bf (3)} $R(f,g)=(b_n)^m\prod\limits_{i=1}^n f(\gamma_i)$\\
	where $\gamma_1, \gamma_2,\cdots, \gamma_n$ are roots of $g(x)$.
	
\end{lemma}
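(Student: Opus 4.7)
The plan is to deduce all three assertions from a single product-of-differences formula for the resultant. Writing $f(x)=a_m\prod_{i=1}^{m}(x-\alpha_i)$ and $g(x)=b_n\prod_{j=1}^{n}(x-\gamma_j)$ in an algebraic closure of $\mathbb Q$, I would adopt the defining identity
$$R(f,g) \;=\; b_n^{m}\prod_{j=1}^{n} f(\gamma_j) \;=\; a_m^{n}b_n^{m}\prod_{i=1}^{m}\prod_{j=1}^{n}(\gamma_j-\alpha_i),$$
which is the standard consequence of the Sylvester determinantal definition of the resultant (a Vandermonde specialization argument that I would simply cite from a standard reference). With this identity in hand, part (3) is essentially definitional and requires no further work.

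For part (1), I would derive the swap relation directly from the product expression. Applying the same definition with the roles of $f$ and $g$ interchanged yields $R(g,f)=a_m^{n}b_n^{m}\prod_{i,j}(\alpha_i-\gamma_j)$. Comparing to the expression for $R(f,g)$ above, each of the $mn$ factors $\gamma_j-\alpha_i$ is negated, which produces exactly the global sign $(-1)^{mn}$.

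For part (2), I would substitute each root $\gamma_j$ of $g$ into the identity $f(x)=g(x)q(x)+r(x)$; since $g(\gamma_j)=0$, this gives $f(\gamma_j)=r(\gamma_j)$ for every $j$. Applying (3) to both pairs $(f,g)$ and $(r,g)$, with $k=\deg r$, yields
$$R(f,g)=b_n^{m}\prod_{j=1}^{n} f(\gamma_j)=b_n^{m}\prod_{j=1}^{n} r(\gamma_j)=b_n^{m-k}\!\left(b_n^{k}\prod_{j=1}^{n} r(\gamma_j)\right)=b_n^{m-k}R(r,g),$$
which is the claimed identity.

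The main subtlety will be pinning down a sign convention for $R(f,g)$: different standard references differ by a global factor of $(-1)^{mn}$, and one must check that (1) and (3) as stated are mutually consistent under a single convention (the one in which $R(f,g)=b_n^{m}\prod_j f(\gamma_j)$ with no extra sign). A secondary bookkeeping point in (2) is that the leading coefficient of $r$ is not in general that of $f$, so the exponent of $b_n$ drops from $m$ to $k$; this is absorbed cleanly into the factor $b_n^{m-k}$ because all three resultants are computed using the same $b_n$. Apart from these conventions, the arguments are routine algebraic manipulations and there is no essential obstacle.
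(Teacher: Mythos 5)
Your derivation is correct, and there is nothing in the paper to compare it against: the authors state this lemma without proof, explicitly introducing it as a list of ``already known properties of the resultant.'' Your route --- taking $R(f,g)=b_n^{m}\prod_j f(\gamma_j)=a_m^{n}b_n^{m}\prod_{i,j}(\gamma_j-\alpha_i)$ as the working definition, reading off (3), getting the sign $(-1)^{mn}$ in (1) by negating each of the $mn$ factors, and obtaining (2) from $f(\gamma_j)=r(\gamma_j)$ together with the exponent bookkeeping $b_n^{m}=b_n^{m-k}\cdot b_n^{k}$ --- is the standard argument and is exactly what a reference would supply; your flagged caveat about fixing a single sign convention consistent with both (1) and (3) is the only real point of care, and you handle it correctly.
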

\indent Note that the discriminant of a monic polynomial $f(x)\in\Z[x]$ and the resultant are related by the following formula
\begin{equation}\label{beta}
	D_f=(-1)^{\frac{n(n-1)}{2}}R(f',f)
\end{equation} where $f'$ is the derivative of $f.$

\begin{lemma}\label{lemma 1}
	Let $f(x)=x^n+ax^2+bx+c\in\Z[x]$ be  a quadrinomial of degree $n\ge3$ with $b^2=4ac.$ Then the following hold:
	\begin{enumerate}
		\item The discriminant of $f(x)$ is 
	\begin{equation}\label{df}
		D_f=(-1)^{\frac{n(n-1)}{2}}\left[n^n(-c)^{n-1}-4(n-2)^{n-2}\left(\frac{b}{2}\right)^n\right].
	\end{equation}
		\item  If $p\nmid b(n-2)$, then $D_f\equiv 0~(\text{mod}~ p^2)$ if and only if $f(-\frac{nb}{2a(n-2)})\equiv 0~(\text{mod}~ p^2).$
	\end{enumerate}
\end{lemma}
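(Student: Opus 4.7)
Proof proposal: For part (1), the plan is to compute $R(f,f')$ by elimination. Note the identity
\[
xf'(x) - nf(x) \;=\; -(n-2)ax^{2} - (n-1)bx - nc \;=:\; g(x).
\]
Evaluating at each root $\theta_i$ of $f$ gives $\theta_i f'(\theta_i) = g(\theta_i)$, so after taking the product over $i$ (and using that $f$ is monic) we obtain
\[
(-1)^{n} c \cdot R(f',f) \;=\; R(g,f) \;=\; R(f,g),
\]
the last equality by Lemma \ref{lemma a}(1), since $n \cdot 2$ is even. Next I compute $R(f,g)$ from the roots of the quadratic $g$: by $b^{2}=4ac$ the discriminant of $g$ collapses to
\[
(n-1)^{2}b^{2} - 4n(n-2)ac \;=\; 4ac\bigl[(n-1)^{2}-n(n-2)\bigr] \;=\; 4ac \;=\; b^{2},
\]
so the roots of $g$ are $\eta_{1} = -nb/(2(n-2)a)$ and $\eta_{2} = -b/(2a)$. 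Since $b^{2}=4ac$ also yields $ax^{2}+bx+c = a(x+b/(2a))^{2}$, I get $a\eta_{2}^{2}+b\eta_{2}+c = 0$, while a short direct calculation gives $a\eta_{1}^{2}+b\eta_{1}+c = 4c/(n-2)^{2}$. Hence $f(\eta_{2}) = \eta_{2}^{n}$ and $f(\eta_{1}) = \eta_{1}^{n} + 4c/(n-2)^{2}$. Applying Lemma \ref{lemma a}(3),
\[
R(f,g) \;=\; (-(n-2)a)^{n}\, f(\eta_{1})\, f(\eta_{2}),
\]
and the substitution $b^{2n} = (4ac)^{n} = 2^{2n}a^{n}c^{n}$ causes all $a$-factors to cancel, producing the asserted closed form. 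Combining with $D_f = (-1)^{n(n-1)/2}R(f,f')$ from \eqref{beta} then finishes part (1).

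For part (2), the hypothesis $p \nmid b$ together with $b^{2}=4ac$ forces $p$ to be odd and $p \nmid ac$; combined with $p \nmid (n-2)$ this makes $2a(n-2)$ a unit modulo $p^{2}$, so $\eta_{1} = -nb/(2a(n-2))$ represents a well-defined element of $\mathbb{Z}/p^{2}\mathbb{Z}$. Writing $b = 2b'$ (so that $b'^{\,2}=ac$), I would derive from the formulas in part (1) the integer identity
\[
c^{\,n-1}\, a^{n}(n-2)^{n}\, f(\eta_{1}) \;=\; \pm\, (b/2)^{n}\, D_{f},
\]
which is a single manipulation: compute $a^{n}(n-2)^{n}f(\eta_1)$ explicitly, multiply by $c^{n-1}$, and use $b'^{\,2n} = (ac)^{n}$ to match the closed form of $D_f$ obtained in part (1). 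Since $c$, $a$, $n-2$, and $b/2$ are all coprime to $p$, the coefficient relating $f(\eta_{1})$ to $D_{f}$ is a unit modulo $p^{2}$, and therefore one side lies in $p^{2}\mathbb{Z}_{(p)}$ if and only if the other does, giving the desired equivalence.

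The main obstacle is the sign and exponent bookkeeping in part (1): every intermediate quantity must be arranged so that $b^{2} = 4ac$ can be applied cleanly, and one must keep careful track of the alternating dependence on the parity of $n$ coming from $(-1)^{n}$ in $\eta_2^n$ and in the formula $\prod\theta_i = (-1)^n c$. Once the closed form in part (1) is secured, part (2) reduces to the observation that the comparison identity above has coefficient invertible mod $p^{2}$ under the stated hypotheses.
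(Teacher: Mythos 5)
Your route is essentially the paper's route in different clothing: your auxiliary quadratic $g(x)=xf'(x)-nf(x)$ is exactly $-n$ times the remainder $r(x)=\frac{a(n-2)}{n}x^2+\frac{b(n-1)}{n}x+c$ that the paper obtains by dividing $f$ by $f'$, so both arguments reduce $R(f,f')$ to an evaluation at the same two points $-b/(2a)$ and $-nb/(2a(n-2))$; you evaluate $f$ there where the paper evaluates $f'$, which is an immaterial difference. Your treatment of part (2) --- clearing denominators to get an integer identity relating $f(-nb/(2a(n-2)))$ to $D_f$ with a coefficient that is a unit modulo $p^2$ --- is also exactly what the paper does. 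Two points, however, need attention. First, you implicitly assume $a\neq 0$ (both for $\deg g=2$ and for the formulas for $\eta_1,\eta_2$); the case $a=0$ forces $b=0$ and must be disposed of separately, as the paper does.

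Second, and more seriously: you defer the ``sign and exponent bookkeeping'' and simply assert that the computation produces ``the asserted closed form.'' It does not. Carrying your own outline through gives
\begin{equation*}
R(f,g)=n^n(-c)^n+4c(n-2)^{n-2}\left(\tfrac{b}{2}\right)^n,\qquad
D_f=(-1)^{\frac{n(n-1)}{2}}\left[n^nc^{\,n-1}+(-1)^n\,4(n-2)^{n-2}\left(\tfrac{b}{2}\right)^n\right],
\end{equation*}
which agrees with \eqref{df} only when $n$ is odd; for even $n$ the two differ by an overall sign. A numerical check confirms that your version is the correct one: for $n=4$, $a=c=1$, $b=2$ the discriminant of $x^4+x^2+2x+1$ is $272$, matching $256+16$, whereas \eqref{df} gives $-272$; likewise $\operatorname{disc}(x^4+1)=256$, not $-256$. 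So the discrepancy is a stray factor $(-1)^{n-1}$ in the stated formula (and in the paper's ``simple calculation''), not a defect of your method --- but a proof cannot claim to arrive at \eqref{df} verbatim, and you should have flagged the mismatch rather than asserting agreement. None of this affects part (2) or any later use of the lemma, since only divisibility of $D_f$ by $p^2$ is ever invoked.
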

\begin{proof}
	{\bf (1)} We can take $a\neq 0$, because if $a = 0$, then using $b^2=4ac$ we have $b=0$. It is easy to check that the discriminant of $f(x) = x^n+c$ is $D_f = (-1)^{\frac{n(n-1)}{2}}n^n(-c)^{n-1}.$ So assume that $a\neq 0$. In view of Lemma \ref{lemma a}(1) and Equation \eqref{beta}, we have $D_f=(-1)^{\frac{n(n-1)}{2}}R(f,f').$ Using division algorithm, one can write $$f(x)=f'(x)q(x)+\frac{a(n-2)}{n}x^2+\frac{b(n-1)}{n}x+c.$$ We denote $\frac{a(n-2)}{n}x^2+\frac{b(n-1)}{n}x+c$ by $r(x)$.  Keeping in mind $(1)$ and  $(2)$ of Lemma \ref{lemma a}, we have $D_f=(-1)^{\frac{n(n-1)}{2}}n^{n-2}R(f',r).$ Since the roots of $r(x)$ are $\gamma_1=-\frac{b}{2a}$ and $\gamma_2=-\frac{nb}{2a(n-2)},$ therefore by $(3)$ of Lemma \ref{lemma a}, we have $$D_f=(-1)^{\frac{n(n-1)}{2}}n^{n-2} \left(\frac{a(n-2)}{n}\right)^{n-1}f'(\gamma_1)f'(\gamma_2).$$ A simple calculation shows that  $$D_f=(-1)^{\frac{n(n-1)}{2}}\left[n^n(-c)^{n-1}-4(n-2)^{n-2}\left(\frac{b}{2}\right)^n\right].$$
	{\bf (2)} Using hypothesis $b^2=4ac$, we have $2$ divides $b.$ Since $p\nmid b(n-2)$, it follows that $f\left(-\frac{nb}{2a(n-2)}\right)\equiv0~(\text{mod}~ p^2)$ if and only if $$\left(-\frac{nb}{2a(n-2)}\right)^n+a\left(-\frac{nb}{2a(n-2)}\right)^2+b\left(-\frac{nb}{2a(n-2)}\right)+c\equiv 0~(\text{mod}~ p^2).$$ Using $b^2=4ac$, the last equation can be rewritten as $$(-nb)^n+n^2c(2a)^n(n-2)^{n-2}-2nc(2a)^n(n-2)^{n-1}+c(2a(n-2))^n\equiv 0~(\text{mod}~ p^2).$$
	On dividing by $2^na^nc$ and simplifying, the last equation becomes $$ \frac{1}{c}\left(\frac{-nb}{2a}\right)^n+4(n-2)^{n-2}\equiv 0~(\text{mod}~ p^2).$$
	Since $2a=\frac{b^2}{2c},$ we see that $f\left(-\frac{nb}{2a(n-2)}\right)\equiv0\mod p^2$ if and only if $$n^n(-c)^{n-1}-4(n-2)^{n-2}\left(\frac{b}{2}\right)^n\equiv0 ~(\text{mod}~ p^2),$$ which completes the proof of the lemma in view of (1).
\end{proof}
The following  result will be used in the proof of Theorem \ref{1.1}.
\begin{lemma}\label{lemmanew}
Let $a,b,c, n$ be integers such that $2\nmid ac$,  $2|b$ and $2^2\nmid b$.  Let $n >2$ be an even integer and $F(x) = (\frac{a+a^2}{2})x^{2} + ( {ac+\frac{b}{2}})x + (\frac{c+c^2}{2})$ be a polynomial. Then the polynomial $F(x)$ is coprime to  $x^{\frac{n}{2}} + ax + c$ modulo $2$ if and only if $a \equiv 1$ $($mod $4)$ or $c \equiv 1$ $($mod $4)$.
\end{lemma}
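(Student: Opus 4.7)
The plan is to reduce everything modulo $2$ and do a short case analysis on the residue classes of $a$ and $c$ modulo $4$. Write $g(x)=x^{n/2}+ax+c$ and note that under the hypothesis $2\nmid ac$ one has $g(x)\equiv x^{n/2}+x+1\pmod{2}$, which has no root in $\mathbb{F}_2$ (it equals $1$ at both $0$ and $1$). Consequently, the only irreducible factors of $g(x)$ mod $2$ that could matter later are $x$ and $x+1$, and neither actually divides $g$; so $F$ and $g$ are coprime mod $2$ unless $F$ has a common factor with $g$ that is a multiple of $x(x+1)=x^2+x$, and the cleanest way is just to identify $F\bmod 2$ in each case.

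First I would analyze the coefficients of $F(x)$ modulo $2$. Since $a$ is odd, $\frac{a+a^{2}}{2}=a\cdot\frac{a+1}{2}\equiv\frac{a+1}{2}\pmod{2}$, which is $\equiv 1$ exactly when $a\equiv 1\pmod{4}$ and $\equiv 0$ when $a\equiv 3\pmod{4}$; similarly for $\frac{c+c^{2}}{2}$. For the middle term, $ac$ is odd and by hypothesis $b/2$ is odd, so $ac+\frac{b}{2}$ is even and the $x$-coefficient of $F$ vanishes mod $2$. Hence $F(x)\bmod 2$ is a polynomial of the form $\alpha x^{2}+\gamma$ with $\alpha,\gamma\in\{0,1\}$ determined by $a,c\pmod{4}$ as above.

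Now I would run through the four cases. If $a\equiv c\equiv 3\pmod{4}$ then $\alpha=\gamma=0$, so $F\equiv 0\pmod{2}$ and $\gcd(F,g)$ is nontrivial (both share all of $\mathbb{F}_{2}[x]$). If $a\equiv 1$, $c\equiv 3$ then $F\equiv x^{2}$, which is coprime to $g$ because $g(0)=1$; if $a\equiv 3$, $c\equiv 1$ then $F\equiv 1$, trivially coprime to $g$; and if $a\equiv c\equiv 1\pmod{4}$ then $F\equiv x^{2}+1=(x+1)^{2}$, which is coprime to $g$ because $g(1)=1$. Putting the four cases together yields coprimality of $F$ and $g$ mod $2$ precisely when $a\equiv 1\pmod{4}$ or $c\equiv 1\pmod{4}$, which is the desired equivalence.

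The argument is essentially arithmetic bookkeeping; the only place requiring a little care is verifying that the $x$-coefficient $ac+\tfrac{b}{2}$ is even, which uses the full strength of the hypothesis $2\mid b$ and $4\nmid b$ together with $2\nmid ac$. No deeper obstacle should arise.
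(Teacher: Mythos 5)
Your proof is correct and follows essentially the same route as the paper: reduce $F$ modulo $2$, note that the middle coefficient $ac+\tfrac{b}{2}$ is a sum of two odd numbers and hence vanishes, identify the leading and constant coefficients of $F\bmod 2$ with the classes of $a$ and $c$ modulo $4$, and run the same four-case analysis using that $x^{n/2}+x+1$ has no root in $\mathbb{F}_2$. No gaps.
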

\begin{proof}
Denote the polynomial $x^{\frac{n}{2}} + ax + c$ by $h(x)$. Since $2|n, 2|b, 2\nmid ac, 4\nmid b$, one can write $n  = 2m, a = 2A+1, b = 2B, c = 2C+1$ with $A, B, C, m \in \Z$ and $2\nmid B$. So
$$h(x)  \equiv x^{m} + x + 1~(\mbox{mod}~2),$$
and $F(x)$ can be written as
$$F(x) = (2A^2 + 3A + 1)x^2 + ((2A+1)(2C+1)+B)x + (2C^2 + 3C + 1),$$ which leads to 
$$F(x) \equiv (A+1)x^{2} +  C + 1~(\mbox{mod}~2).$$ Let $\alpha = A$ $($mod $2)$ and $\gamma = C$ $($mod $2)$. Then the values of $\alpha$ and $\gamma$ lie in $\{0, 1\}$. Hence it remains to prove the lemma in the following four cases.

\textbf{Case 1.} $\alpha = 0, \gamma = 0.$ In this case 
$$F(x) \equiv (x+1)^2 ~(\mbox{mod}~2),$$ and one can easily check that $F(x)$ is coprime to $h(x)$ modulo $2$.

\textbf{Case 2.} $\alpha = 0, \gamma = 1.$ Now we have
$$F(x) \equiv x^{2} ~(\mbox{mod}~2).$$ Therefore, we see that $F(x)$  is coprime to $h(x)$ modulo $2$. 

\textbf{Case 3.}  $\alpha = 1, \gamma = 1.$ In this case $${F}(x) \equiv 0~(\mbox{mod}~2).$$ So $h(x)$ divides $F(x)$ modulo $2$, hence $F(x)$ can not be coprime to $h(x)$ modulo $2$ in this situation.

\textbf{Case 4.} $\alpha = 1, \gamma = 0.$ Here we have $$F(x) \equiv 1  ~(\mbox{mod}~2),$$ which shows that $F(x)$ is coprime to $h(x)$ modulo $2$.

Therefore, in view of the above four cases, we see that $F(x)$ is not coprime to $h(x)$ modulo $2$ if and only if we are in Case 3, i.e., $\alpha = 1$ and $\gamma = 1.$ Returning to the definitions of $\alpha, \gamma$ we obtain that this happens if and only if one has $a \equiv 3$ (mod $4$) and $c \equiv 3$ (mod $4$), which completes the proof of the lemma.
\end{proof}
In 1878, Dedekind  gave the following  simple criterion known as Dedekind Criterion (cf.  \cite[Theorem 6.1.4]{HC}, \cite{RD}) which gave necessary and sufficient condition to be satisfied by $f(x)$ so that $p$ does not divide   $[\Z_K : Z[\theta]]$. 
\begin{theorem}\label{dd}
 (Dedekind Criterion) {\it Let $K=\mathbb{Q}(\theta)$ be an
algebraic number field with $f(x)$ as the minimal polynomial of
the algebraic integer $\theta$ over $\mathbb{Q}.$ Let $p$ be a
prime and
$\overline{f}(x) = \overline{g}_{1}(x)^{e_{1}}\ldots \overline{g}_{t}(x)^{e_{t}}$ be
the factorization of $\overline{f}(x)$ as a product of powers of
distinct irreducible polynomials over $\mathbb{Z}/p\mathbb{Z},$
with each $g_{i}(x)\in \mathbb{Z}[x]$ monic. Let $M(x)$ denote the polynomial
$\frac{1}{p}(f(x)-g_{1}(x)^{e_{1}}\ldots g_{t}(x)^{e_{t}})$ with
coefficients from $\mathbb{Z}.$ Then $p$ does not divide
$[\Z_{K}:\mathbb{Z}[\theta]]$ if and only if for each $i,$ we have
either $e_{i}=1$ or $\bar{g}_{i}(x)$ does not divide
$\overline{M}(x).$} 
\end{theorem}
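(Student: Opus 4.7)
The plan is to reduce the question to whether a natural map of finite rings is an isomorphism, and then analyze both directions via valuations of primes above $p$. Since $\Z_K/\Z[\theta]$ is a finite abelian group of order $[\Z_K:\Z[\theta]]$, the prime $p$ divides the index if and only if this group has nontrivial $p$-torsion. Tensoring the short exact sequence
$$0\longrightarrow\Z[\theta]\longrightarrow\Z_K\longrightarrow\Z_K/\Z[\theta]\longrightarrow 0$$
with $\Z/p\Z$ and applying the snake lemma shows that $p\nmid[\Z_K:\Z[\theta]]$ if and only if the natural map $\overline{\phi}\colon\Z[\theta]/p\Z[\theta]\to\Z_K/p\Z_K$ is injective; since both sides are $\mathbb{F}_p$-vector spaces of dimension $n$, this is equivalent to $\overline{\phi}$ being an isomorphism. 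By the Chinese remainder theorem, the source decomposes as $\prod_i\mathbb{F}_p[x]/(\overline{g}_i^{e_i})$, and since each $e_i=1$ factor is already a field, the only obstruction can come from factors with $e_i\geq 2$.

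\textbf{Sufficiency.} Assume that for each $i$ with $e_i\geq 2$ one has $\overline{g}_i\nmid\overline{M}$. I would establish the isomorphism via Kummer--Dedekind. The ideals $\mathfrak{p}_i:=(p,g_i(\theta))$ are pairwise distinct maximal ideals of $\Z_K$ with residue field $\mathbb{F}_p[x]/(\overline{g}_i)$, and every prime of $\Z_K$ above $p$ equals one of them. Writing $p\Z_K=\prod_i\mathfrak{p}_i^{\epsilon_i}$ and $T(x):=\prod_j g_j(x)^{e_j}$ (so that $f=T+pM$), the identity $T(\theta)=-pM(\theta)$ together with $v_{\mathfrak{p}_i}(g_j(\theta))=0$ for $j\neq i$ yields $e_i\,v_{\mathfrak{p}_i}(g_i(\theta))=\epsilon_i+v_{\mathfrak{p}_i}(M(\theta))$. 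Under the hypothesis $\overline{g}_i\nmid\overline{M}$ one has $M(\theta)\notin\mathfrak{p}_i$, so $v_{\mathfrak{p}_i}(M(\theta))=0$, giving $\epsilon_i=e_i\,v_{\mathfrak{p}_i}(g_i(\theta))\geq e_i$. Combined with the general bound $\sum_i\epsilon_i\deg\overline{g}_i\leq n=\sum_i e_i\deg\overline{g}_i$, this forces $\epsilon_i=e_i$ for every $i$; then $\dim_{\mathbb{F}_p}(\Z_K/p\Z_K)=n$ and comparing local factors on the two sides via the residue-field isomorphism exhibits $\overline{\phi}$ as the required isomorphism.

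\textbf{Necessity.} Conversely, suppose $e_i\geq 2$ and $\overline{g}_i\mid\overline{M}$ for some $i$. Write $M=g_ih+pk$ with $h,k\in\Z[x]$, set $P(x):=\prod_{j\neq i}g_j(x)^{e_j}$, and exhibit the explicit element
$$\alpha:=\frac{1}{p}\, g_i(\theta)^{e_i-1}P(\theta)\in K.$$
The identity $p\alpha\cdot g_i(\theta)=T(\theta)=-pM(\theta)$ gives $g_i(\theta)\alpha=-M(\theta)$; raising to the power $e_i-1$ and multiplying by $\alpha$ yields the key formula $\alpha^{e_i}=(-1)^{e_i-1}M(\theta)^{e_i-1}P(\theta)/p$. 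Expanding $M^{e_i-1}=(g_ih+pk)^{e_i-1}=g_i^{e_i-1}h^{e_i-1}+p\,R(x)$ with $R\in\Z[x]$ and substituting then shows that $\alpha$ satisfies the monic relation
$$\alpha^{e_i}-(-1)^{e_i-1}h(\theta)^{e_i-1}\alpha-(-1)^{e_i-1}R(\theta)P(\theta)=0$$
over $\Z[\theta]$, so $\alpha\in\Z_K$. Since $p\alpha=g_i(\theta)^{e_i-1}P(\theta)$ reduces to a nonzero polynomial of degree $n-\deg\overline{g}_i<n$ in $\mathbb{F}_p[x]/(\overline{f})$, we have $\alpha\notin\Z[\theta]$, and therefore $p\mid[\Z_K:\Z[\theta]]$.

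The main obstacle is the integrality calculation in the necessity direction: extracting an explicit monic polynomial over $\Z[\theta]$ annihilating $\alpha$ from the single divisibility $\overline{g}_i\mid\overline{M}$. The key identity $\alpha^{e_i}=(-1)^{e_i-1}M(\theta)^{e_i-1}P(\theta)/p$, combined with the binomial expansion of $(g_ih+pk)^{e_i-1}$, is precisely what clears the $p$ in the denominator; once that identity is in place, the rest is a careful bookkeeping of valuations and dimensions.
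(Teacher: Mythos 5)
The paper itself gives no proof of Theorem \ref{dd}: it is quoted as a classical result with references to Cohen and to Dedekind's 1878 paper, so there is nothing in-paper to compare your argument against. Judged on its own terms, your proposal follows the standard modern proof. The reduction via $\mathrm{Tor}$ to the injectivity of $\overline{\phi}\colon\Z[\theta]/p\Z[\theta]\to\Z_K/p\Z_K$ is correct, and the necessity direction is complete: the identity $\alpha^{e_i}=(-1)^{e_i-1}M(\theta)^{e_i-1}P(\theta)/p$, the binomial expansion of $(g_ih+pk)^{e_i-1}$ that clears the denominator, and the degree count showing $g_i(\theta)^{e_i-1}P(\theta)\notin p\Z[\theta]$ all check out, with $e_i\ge 2$ being exactly what makes the monic relation nondegenerate.

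The step you must tighten is the opening of the sufficiency direction. You assert that the ideals $(p,g_i(\theta))\Z_K$ are pairwise distinct maximal ideals of $\Z_K$ and that every prime above $p$ equals one of them; that is essentially the conclusion of Kummer--Dedekind, which is available only after one knows $p\nmid[\Z_K:\Z[\theta]]$, so as written the step begs the question. What holds unconditionally is weaker: the maximal ideals of $\Z[\theta]$ over $p$ are the $(p,g_i(\theta))\Z[\theta]$, and since $\prod_j g_j(\theta)^{e_j}=-pM(\theta)$ and the $\bar g_j$ are pairwise coprime, every prime $\mathfrak{P}$ of $\Z_K$ above $p$ contains $g_i(\theta)$ for exactly one $i$, while lying over guarantees at least one such $\mathfrak{P}$ for each $i$. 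Your valuation identity $e_i\,v_{\mathfrak{P}}(g_i(\theta))=v_{\mathfrak{P}}(p)+v_{\mathfrak{P}}(M(\theta))$ and the bound $f_{\mathfrak{P}}\ge\deg\bar g_i$ then apply to each such $\mathfrak{P}$, and the count $\sum_{\mathfrak{P}\mid p}e_{\mathfrak{P}}f_{\mathfrak{P}}=n$ forces exactly one prime $\mathfrak{P}_i$ over each index, with $e_{\mathfrak{P}_i}=e_i$, $f_{\mathfrak{P}_i}=\deg\bar g_i$ and $v_{\mathfrak{P}_i}(g_i(\theta))=1$; only at that point do your $\mathfrak{p}_i$ become legitimate. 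The closing sentence about ``comparing local factors'' also deserves one more line: surjectivity of each local map $\mathbb{F}_p[x]/(\bar g_i^{\,e_i})\to\Z_K/\mathfrak{P}_i^{e_i}$ holds because $g_i(\theta)$ is a uniformizer at $\mathfrak{P}_i$ and the residue field is already generated by the image of $\theta$. With these repairs the argument is a complete and correct proof of the criterion.
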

We now state the following simple lemma proved in (\cite[Lemma 2.2]{JNT}), which will be used in the sequel. It can be easily proved using the Binomial Theorem.
\begin{lemma}\label{lala}

Let $h(x) = x^{m} + ax^{s} + b \in \Z[x]$ be a monic polynomial of degree $m$. Let $p$ be a prime and $k \in \N$. Then   
 $h(x^{p^{k}}) =  h(x)^{p^{k}} +   ph(x)T(x) + (ax^{sp^{k}} + b) + (-ax^{s} - b)^{p^{k}}$ for some polynomial $T(x) \in \Z[x]$.
 \end{lemma}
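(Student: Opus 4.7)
The plan is to derive the identity as a direct consequence of the binomial expansion, starting from the trivial rewriting of the relation defining $h(x)$. First I would observe that since $h(x) = x^m + ax^s + b$, one has the identity $x^m = h(x) + (-ax^s - b)$, so that raising both sides to the $p^k$-th power and expanding gives
$$x^{mp^k} = \bigl(h(x) + (-ax^s-b)\bigr)^{p^k} = \sum_{i=0}^{p^k}\binom{p^k}{i}\, h(x)^i\, (-ax^s - b)^{p^k - i}.$$

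Next I would separate the two extreme terms of this sum from the intermediate range. The terms $i=p^k$ and $i=0$ contribute exactly $h(x)^{p^k}$ and $(-ax^s-b)^{p^k}$, which are precisely the two unaccompanied summands demanded by the statement. For $1 \le i \le p^k - 1$, the classical divisibility $p \mid \binom{p^k}{i}$ lets me write $\binom{p^k}{i} = p\,c_i$ with $c_i \in \Z$; combining this with $i \ge 1$, which means $h(x)^i = h(x) \cdot h(x)^{i-1}$, I can factor $p\,h(x)$ out of every intermediate summand and collapse the middle range into $p\,h(x)\,T(x)$ with
$$T(x) = \sum_{i=1}^{p^k - 1} c_i\, h(x)^{i-1}\, (-ax^s - b)^{p^k - i} \in \Z[x].$$

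Combining the three pieces yields $x^{mp^k} = h(x)^{p^k} + p\,h(x)\,T(x) + (-ax^s - b)^{p^k}$, and adding $ax^{sp^k}+b$ to both sides converts the left-hand side into $h(x^{p^k}) = x^{mp^k} + ax^{sp^k} + b$ and produces the claimed identity verbatim. The argument is essentially formal and I do not anticipate a real obstruction: the only non-trivial ingredient is the arithmetic fact $p \mid \binom{p^k}{i}$ for $1 \le i \le p^k - 1$, which is standard (it follows, for instance, from Kummer's theorem on the $p$-adic valuation of binomial coefficients, or directly from $i\binom{p^k}{i} = p^k\binom{p^k-1}{i-1}$ together with $v_p(i) < k$). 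So the proof reduces to citing this classical fact and carrying out the bookkeeping of the binomial expansion.
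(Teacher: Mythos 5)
Your argument is correct and is exactly the binomial-theorem computation that the paper alludes to (it omits the proof, citing \cite[Lemma 2.2]{JNT} and remarking that it ``can be easily proved using the Binomial Theorem''). Writing $x^m=h(x)+(-ax^s-b)$, raising to the $p^k$-th power, peeling off the $i=0$ and $i=p^k$ terms, and using $p\mid\binom{p^k}{i}$ for $1\le i\le p^k-1$ to absorb the middle terms into $p\,h(x)T(x)$ is precisely the intended proof, so there is nothing to add.
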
 
 The following result will be used in the proof of Proposition \ref{1.1p} (cf. \cite[Theorem 4.24
 ]{Nar}).  Its proof is omitted.
\begin{theorem}\label{nana}
Let $p$ be a prime number and $K/\Q$ be an extension of degree $n$. Let $p\Z_K = \wp_1^{e_1}\cdots \wp_t^{e_t}$, where $\wp_1, \cdots, \wp_t$ are distinct prime ideals of $\Z_K$ and $N_{K/\Q}(\wp_i) = p^{f_i}$. If $(p, e_i)$ = 1 for $1 \leq i \leq t$, then the exact power of $p$ dividing $d_K$ is ${\sum\limits_{i=1}^{t}f_i(e_i-1)}$.
\end{theorem}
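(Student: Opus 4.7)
The plan is to analyze the factorization of $\overline f$ modulo $p$ and then pass to the $p$-adic completion of $K$, using the classification of quadratic extensions of $\Q_p$ for odd $p$. First, $p\nmid b$ together with $b^{2}=4ac$ forces $p$ to be odd (since $2\mid b$) and $p\nmid ac$ (since $v_p(a)+v_p(c)=2v_p(b)=0$). Setting $\alpha:=-\overline b/(2\overline a)\in\F_p^\times$, the relation $\overline b^{2}=4\overline a\overline c$ lets me complete the square to get
\[
\overline f(x)\;=\;x^{n}+\overline a(x-\alpha)^{2}\in\F_p[x].
\]

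Next I would pin down $\gcd(\overline f,\overline f')$. A direct calculation yields the identity
\[
\overline n\,\overline f(x)-x\,\overline f'(x)\;=\;\overline a(\overline n-2)(x-\alpha)(x-\beta),\qquad\beta:=\tfrac{\overline n\,\alpha}{\overline n-2},
\]
valid whenever $\overline n\neq 2$. Since $\overline a\alpha^{2}+\overline b\alpha+\overline c=0$, we have $\overline f(\alpha)=\alpha^{n}\neq 0$, so $\alpha$ is never a common root of $\overline f$ and $\overline f'$. Consequently, if $\overline n\in\{0,2\}$ then $\alpha$ would be the only candidate common root (check: $\overline n=0$ gives $\overline f'=2\overline a(x-\alpha)$, while $\overline n=2$ reduces the left-hand side of the identity to $\overline b x+2\overline c$, whose only root is $\alpha$), so there would be no common root at all, contradicting $p\mid D_f$ (which follows from $p\mid d_K$). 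Hence $p\nmid n(n-2)$ and $\beta\in\F_p$ is the unique candidate. Moreover, the right-hand side of the identity has $\beta$ with multiplicity $1$, so the multiplicity $m$ of $\beta$ in $\overline f$ satisfies $m-1\leq 1$; combined with $m\geq 2$ this gives $m=2$. Therefore
\[
\overline f(x)=(x-\beta)^{2}\,h(x),
\]
with $h\in\F_p[x]$ squarefree and coprime to $x-\beta$.

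Finally I pass to $\Q_p$. Hensel's lemma lifts this coprime factorization to $f=f_1 f_2$ in $\Z_p[x]$ with $\overline{f_1}=(x-\beta)^{2}$ and $\overline{f_2}=h$, and so
\[
K\otimes_{\Q}\Q_p\;\cong\;A_1\times A_2,\qquad A_i:=\Q_p[x]/(f_i).
\]
Iterated Hensel lifting on the distinct irreducible factors of $h$ shows $A_2$ is a product of unramified extensions of $\Q_p$, contributing $0$ to $v_p(d_K)$. For $A_1$: either $f_1$ splits over $\Q_p$, making $A_1\cong\Q_p\times\Q_p$ unramified and forcing $p\nmid d_K$, contradicting the hypothesis; or $f_1$ is irreducible, making $A_1/\Q_p$ a quadratic extension, necessarily ramified because $\overline{f_1}$ is a nontrivial square. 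Since $p$ is odd, every ramified quadratic extension of $\Q_p$ has discriminant of valuation exactly $1$. Hence $v_p(d_K)=v_p(\operatorname{disc}(A_1/\Q_p))=1$.

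The main obstacle is the middle paragraph: extracting from $b^{2}=4ac$ the precise mod $p$ structure of $\overline f$, namely that there is a single repeated factor, linear, of multiplicity exactly $2$. Once this is established, the last step is routine; alternatively, one could invoke Theorem \ref{nana}, observing that $p$ odd combined with $e_i\leq 2$ makes all ramification tame, and the factorization above forces exactly one prime of $\Z_K$ above $p$ with $(e,f)=(2,1)$, giving $\sum_i f_i(e_i-1)=1$.
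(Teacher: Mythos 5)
Your proposal does not prove the statement it is attached to. Theorem \ref{nana} is a completely general fact about an arbitrary number field $K$ and an arbitrary prime $p$ whose ramification indices $e_i$ are coprime to $p$: the conclusion is that $v_p(d_K)=\sum_{i=1}^t f_i(e_i-1)$ for whatever $e_i,f_i$ occur. Nothing in its hypotheses refers to a defining polynomial, let alone to a quadrinomial $x^n+ax^2+bx+c$ with $b^2=4ac$ and $p\nmid b$. Your entire argument, by contrast, is built on that quadrinomial structure (completing the square to get $\overline f(x)=x^n+\overline a(x-\alpha)^2$, the identity $\overline n\overline f-x\overline f'=\overline a(\overline n-2)(x-\alpha)(x-\beta)$, Hensel lifting of the resulting factorization), and it only reaches the conclusion in the very special configuration where exactly one prime above $p$ has $e=2$, $f=1$, all others are unramified, and $p$ is odd. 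What you have written is, in substance, a proof of Proposition \ref{1.1p} of the paper (and a reasonable one, close in spirit to the paper's own proof of that proposition, which also identifies $-\overline{nb/(2(n-2)a)}$ as the unique repeated root of multiplicity two and then lifts via Hensel). Indeed your closing sentence proposes to ``invoke Theorem \ref{nana}'' as an alternative finish, which confirms you are treating the theorem under review as a known ingredient rather than proving it.

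To actually prove Theorem \ref{nana} one argues through the different ideal: tameness ($p\nmid e_i$) gives $v_{\wp_i}(\mathfrak d_{K/\Q})=e_i-1$ for each $i$ (e.g.\ by computing the different of the tamely ramified local extension $K_{\wp_i}/\Q_p$ from a uniformizer satisfying an Eisenstein equation of degree $e_i$, where the derivative has valuation exactly $e_i-1$ precisely because $p\nmid e_i$), and then the norm formula $|d_K|=N_{K/\Q}(\mathfrak d_{K/\Q})$ yields $v_p(d_K)=\sum_i f_i(e_i-1)$. The paper itself does not prove the theorem; it cites it from Narkiewicz and omits the proof. So the gap is not a faulty step inside your argument but a mismatch of target: your reasoning neither covers general $e_i$ coprime to $p$ (it cannot produce the summand $f_i(e_i-1)$ for $e_i>2$ or $f_i>1$) nor is it independent of the special polynomial shape, and hence it cannot stand as a proof of the stated theorem.
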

 \section{\textsc{Proof of Theorem \ref{1.1}}.}
 \begin{proof}[Proof of Theorem \ref{1.1}.]
   Consider the first case, when  $p\mid a $ and $p\mid c$. In this case  $f(x) \equiv x^n  ~(\text{mod} ~p)$. As in Theorem \ref{dd}, one can check that $M(x)=\frac{a}{p}x^2+\frac{b}{p}x+\frac{c}{p},$ hence we see that $p\nmid [\Z_K : \Z[\theta]]$ if and only if $x$ does not divide $\overline{M}(x)$. This is possible only when $p^2\nmid c.$ \\

  Consider now the case when $p\mid a$ and $p\nmid c$. Since $b^2=4ac$, we have $p|b$ and hence  $f(x)\equiv x^n+c ~(\text{mod}~ p).$ Keeping in mind the fact that $p|D_f$ and Equation $(\ref{df}),$ we see that $p\mid n.$  Write $n=p^rm,$ where $r\geq 1$ and  $p\nmid m.$  In view of Binomial theorem, we have 
  \begin{equation}\label{gt}
  	x^n+c\equiv (x^m+c)^{p^r}~(\text{mod}~ p).
  \end{equation}
   Let $\prod\limits_{i=1}^{t}\bar{g}_i(x)$ be the factorization of $x^m + \bar{c}$ over $\Z/p\Z$, where $g_1(x),g_2(x),\cdots,g_t(x)$ belonging to $\Z[x]$ are  monic polynomials which are distinct and irreducible modulo $p$. Write $x^m+c$ as $\prod\limits_{i=1}^{t}\bar{g}_i(x)+p A_1(x)$ for some polynomial $A_1(x)\in\Z[x]$. Then
 \begin{equation*}
  	f(x)=((x^{p^r})^m+c)+ax^2+bx=h(x^{p^r})+ax^2+bx
  \end{equation*}
where $h(x)=x^m+c.$ We now split this case into two sub cases according as $p$ is an odd prime or not.

First consider the sub case when $p\neq 2.$  Keeping in mind  that $p^2\mid a$ and applying Lemma \ref{lala} to $h(x),$ we see that
   \begin{equation}\label{2.5a}
  	f(x) = \big(\prod\limits_{i=1}^{t}g_i(x)+p A_1(x)\big)^{p^r} + pA(x) \prod\limits_{i=1}^{t}g_i(x) + p^2B(x) + c -c^{p^r} + bx
  \end{equation}
  for some polynomials $A(x), B(x) \in \Z[x]$. Denote $\frac{{c -c^{p^r}}}{p}$ and $\frac{b}{p}$ by $c_1$ and $b_1$, respectively.  In view of $(\ref{gt})$,  $\bar{f}(x)= \big(\prod\limits_{i=1}^{t}\bar{g}_i(x)\big)^{p^r}.$  Write $f(x)$ as $\prod\limits_{i=1}^{t}g_i(x)^{p^r} + pM(x)$, $M(x)\in \Z[x]$, $M(x)\in \Z[x]$. Keeping in mind that $r\geq 1$, it is immediate from (\ref{2.5a}) that   $$\overline{M}(x) =\bar{A}(x) \prod\limits_{i=1}^{t}\bar{g}_i(x) + \overline{c_1} + \overline{b_1}x.$$
  Applying Theorem \ref{dd}, $p\nmid [\Z_K:\Z_{\theta}]$ if and only $\overline{M}(x)$ is coprime $\prod\limits_{i=1}^{t}\bar{g}_i(x)$ which by virtue of the above equation holds if and only if  $(\overline{c_1} + \overline{b_1}x)$ is coprime to $\prod\limits_{i=1}^{t}\bar{g}_i(x)^{p^r} = \bar{h}(x^{p^r}) = x^n+\bar{c}$. This happens if and only if either  $p\mid b_1$ and $p\nmid c_1$ or  $p\nmid b_1 ((-c_1)^n+cb_1^n).$\\
  Now let $p=2.$ Since $b^2 = 4ac$, we see that $4 \mid a$ and $4 \mid b$. Therefore \begin{equation*}\label{2.5a1}
  	f(x) = \big(\prod\limits_{i=1}^{t}g_i(x)+2 A_1(x)\big)^{2^r} + 2C(x) \prod\limits_{i=1}^{t}g_i(x) + 4D(x) + c + c^{2^r} 
  \end{equation*}
  for some polynomials $C(x), D(x) \in \Z[x].$ Here  $1\le i\le t,$ $\bar{g}_i(x)\nmid \overline{M}(x)$ if and only if $\bar{g}_i(x)\nmid \overline{(\frac{{c +c^{2^r}}}{2})} .$ 
  This happens only when $4\nmid (c^{2^r}+c)$. 
  

 Consider now the case when $p\nmid a$ and $p\mid c.$ Note that $f(x)\equiv x^2(x^{n-2}+a)~(\text{mod}~ p).$  Let $l \geq 0$ denote the highest power of $p$ dividing $n-2$. Consider first the possibility when $l = 0$. If $p\ne 2,$ in view of  $b^2=4ac,$ we have $p^2\mid c.$ Since $x$ is the only repeated factor of $\bar{f}(x)$, we see that it divides $\overline{M}(x) =\frac{\bar{b}}{p}x$. Hence by Theorem \ref{dd}, $p$ divides $[\Z_K:\Z[\theta]]$. If $p=2,$ then observe that $p^2\mid b$ and $p^2\mid c$. Thus $\overline{M}(x)=\bar 0$. Therefore, in the case $l = 0$, $p$ always divides $[\Z_K:\Z[\theta]]$.
   Now assume that $l \geq 1$, say $n-2 = p^l s'$ and $p\nmid s'$. Write $x^{s'} + a = g_1(x)\cdots g_t(x) + pH(x)$, where $g_1(x), \cdots, g_{t}(x)$  are monic polynomials which are distinct as well as   irreducible modulo $p$ and $H(x) \in \Z[x]$. Keeping in mind that $p^2\mid c$ and applying Lemma \ref{lala} to $h(x)$ = $x^{s'} + a$, we can write $f(x)= x^2 (x^{n-2}  +a) +bx+c$ as
    \begin{equation}\label{eq:1303}
    f(x)=x^2\left[(\prod\limits_{i=1}^{t}g_i(x)+pH(x))^{p^l}+pT(x)\prod\limits_{i=1}^{t}g_i(x)+p^2U(x)+a+(-a)^{p^l}\right]+bx +c, 
   \end{equation}  
    \noindent where $T(x), U(x)$ belong to $\Z [x]$.                  
   Note that $x,\bar{g}_1(x) , \cdots, \bar{g}_t(x)$ are distinct irreducible factors of $\overline F(x)$. Denote $\frac{{a+(-a)^{p^l}}}{p}$ and $\frac{b}{p}$ by $a_1$ and $b_1$, respectively.  Since  $l\geq 1$,  by Theorem \ref{dd} $p\nmid [\Z_K:\Z[\theta]]$ if and only if $\bar{a}_1x^2 +\bar{b}_1$ and $x^{n-2}+\bar{a}$ are coprime. Note that if $p=2$, then $4|b$ and hence in this situation the polynomials $\bar{a}_1x^2 +\bar{b}_1$ and $x^{n-2}+\bar{a}$ will be coprime if and only if $2\nmid \bar{a}\bar{a}_1.$ Now assume that $p\neq 2$. In this situation, we see that $\bar{a}_1x^{2} + \bar{b}_1$ and $ x^{n-2} + \bar{a}$ are coprime if and only if either $($I$)$ $p \mid a_1$ and $p\nmid b_1$ or $($II$)$ $\bar{a}_1 \neq \bar{0}$ and $x^{2} + \frac{\bar{b}_1}{\bar{a}_1}$, $x^{n-2} + \bar{a}$ are coprime. Write $n-2=2^d s$ where $d$ equals $1$ or $0$ according as $2$ divides $n-2$ or not. Note that if $\xi$ is a common root of the polynomials  $x^{2} + \frac{\bar{b}_1}{\bar{a}_1}$, $ x^{n-2} + \bar{a}$   in the algebraic closure of $\Z/p\Z$, then we see that $(\frac{-\bar{b}_1}{\bar{a}_1})^{\frac{n-2}{2^d}}=(-\bar{a})^{2^{1-d}}$. Also a simple calculation shows that if $\xi^{2^{d}}  = (-\bar{a})^{-2^d}(-\frac{\bar{b}_1}{\bar{a}_1})^{\frac{n-1}{2^{1-d}}}$ with $({\frac{{-\bar{b}_1}}{\bar{a}_1}})^{\frac{n-2}{2^d}}=(-\bar{a})^{2^{1-d}}$, then $\xi$ is a  root of the polynomials $x^{2} + \frac{\bar{b}_1}{\bar{a}_1}$, $ x^{n-2} + \bar{a}$. Hence one can easily see that $($II$)$ holds if and only if $p \nmid \bar{a}_1[(-\bar{a})^{2^{1-d}}{\bar{a}_1}^{\frac{n-2}{2^d}}-{(-\bar{b}_1)}^{\frac{n-2}{2^d}}]$.


Consider now the case when $2\nmid ac.$ As $b^2=4ac,$  so $4\nmid b.$    Keeping in mind that $2|D_f$ and Equation $(\ref{df}),$ we see that $2\mid n.$  
  Write $n = 2m$. 
   Observe that $\bar{f}(x) = x^n + \bar{a}x^{2} + \bar{c}$. Denote $x^m + ax + c$ by $h(x)$ so that $f(x) = h(x^2) + bx.$ Let $h(x) \equiv \prod\limits_{i=1}^{t}g_i(x)^{r_i}$ $($mod $2)$ be the factorization of $h(x)$ into a product of irreducible polynomials modulo $2$ with $g_i(x)$ belonging to $\Z[x]$ monic and $r_i > 0$. Using Lemma \ref{lala}, we see that
  $$f(x) = h(x^2) + bx = (h(x))^2 + 2h(x)N(x) + ax^{2}+c + (-ax-c)^2 + bx$$ for some $N(x) \in \Z[x]$. Substituting $h(x) = \prod\limits_{i=1}^{t}g_i(x)^{r_i} + 2H(x)$ with $H(x) \in \Z[x]$ in the above equation, we see that there exists $N_1(x) \in \Z[x]$ such that 
  \begin{equation}
  f(x) = \big(\prod\limits_{i=1}^{t}g_i(x)^{r_i}\big)^{2} + 4N_1(x) + 2h(x)N(x)   + ax^{2}+c + (-ax-c)^2 + bx.
  \end{equation}
  Write $f(x) = \big(\prod\limits_{i=1}^{t}g_i(x)^{r_i}\big)^{2} + 2M(x)$ for some $M(x) \in \Z[x]$. Applying Theorem \ref{dd}, we see that $2 \nmid [\Z_K :\Z[\theta]]$ if and only if $\frac{1}{2}[ ax^{2}+c + (-ax-c)^2 + bx]$ is coprime to $h(x)$ modulo $2$, which in view of Lemma \ref{lemmanew} is equivalent to saying that either $a\equiv 1$ (mod $4$) or $c \equiv 1$ (mod $4$). \\ 

  Consider the last case when $p\nmid b.$ In view of $b^2=4ac$ and the fact that $p|D_f$, we see that $p\ne 2$ and $p\nmid n(n-2).$ One can easily check that $-(\overline{\frac{nb}{2(n-2)a}})$ is the only repeated root of $\bar{f}(x)$  and has multiplicity two.  Choose $d \in \Z$ such that
\begin{equation}\label{2.1a}
nbd \equiv -2(n-2)a ~~(\mbox{mod} ~p^2).
\end{equation}
Therefore, we have
\begin{equation}\label{tuntun}
\bar{f}(x) = (x-\bar{d})^2\bar{h}(x),
\end{equation}
  where $\bar{h}(x)$ belonging to $\Z/p\Z[x]$ is a separable polynomial. Write 
\begin{equation}\label{2.2a}
 f(x) = (x-d)q(x) + f(d)
 \end{equation}
  for some monic polynomial $q(x) \in \Z[x]$. Observe that $\bar{q}(x) = (x-\bar{d})\bar{h}(x)$ with $(x-\bar{d})$ does not divide $\bar{h}(x)$. Let $\bar{h}(x)  = \prod\limits_{i=1}^{t}\bar{g}_i(x)$ be the factorization of $\bar{h}(x)$ into a product of distinct irreducible polynomials over $\Z/p\Z$ with each $g_i(x) \in \Z[x]$ monic. Write 
$$ q(x) = (x-d) \prod\limits_{i=1}^{t}g_i(x) + ph_1(x)$$  
for some $h_1(x)$ belonging to $\Z[x]$. Substituting from the above equation in $(\ref{2.2a})$, we see that
 \begin{equation*}
  f(x) = (x-d)^2\prod\limits_{i=1}^{t}g_i(x) + p(x-d)h_1(x)+ f(d).
 \end{equation*}
Therefore applying Theorem \ref{dd}, we see that $p\nmid [\Z_K : \Z[\theta]]$ if and only if $p^2$ does not divide $f(d)$. Hence by Lemma \ref{lemma 1}, we have $p\nmid [\Z_K : \Z[\theta]]$ if and only if $p^2$ does not divide $D_f.$
So the proof of the theorem is complete. 
\end{proof}
\begin{proof}[Proof of Proposition \ref{1.1p}] Keeping in mind the formula $D_f = [\Z_K : \Z[\theta]]^2d_K$, we have  $p|D_f$ as $p$ divides $d_K$.  Now recall the proof of the last case of Theorem \ref{1.1}, we see that $p\neq 2$, $p\nmid n(n-2)$ and $$\bar{f}(x) = (x-\bar{d})^2\prod\limits_{i=1}^{t}\bar{g}_i(x),$$ where $x-\bar{d}, \bar{g}_1(x), \cdots, \bar{g}_t(x)$ are distinct and irreducible polynomials modulo $p$ with $g_i(x) \in \Z[x]$ monic for each $i$. Thus  using Hensel's Lemma, we see that $f(x)$ has a factorization in $\Q_p[x]$ such that 
$$f(x) = G(x)G_1(x)\cdots G_t(x),$$ where $G_1(x), \cdots, G_t(x)$ are distinct irreducible polynomials. 
The above factorization leads into the prime ideal factorization of $p\Z_K$ as $p\Z_K = \mathfrak{p} \wp_1 \cdots\wp_t$, where $\wp_i$'s are distinct prime ideals of $\Z_K$ with $N_{K/\Q}(\wp_i) = p^{\deg G_i(x)}$, and as $p$ dividing $d_K$ is ramified in $\Z_K$, we have $\mathfrak{p} = \wp^2$ for some prime ideal of $\Z_K$ with $N_{K/\Q}(\wp) = p$ and $\gcd(\wp, \wp_i) = \Z_K$.  Since $p\neq 2$, using Theorem \ref{nana}, we obtain that the exact power of $p$ dividing $d_K$ is one.
\end{proof}

\section{Examples.}
We now provide some examples which illustrate our results. In these examples, $K = \Q(\theta)$ with $\theta$ a root of $f(x)$.

\begin{example}
 Suppose that $c \neq \pm 1$ is a squarefree integer. Consider the polynomial $f(x) = x^7 + c(x^2+2x+1)$. Then the discriminant $D_f$ of $f(x)$ is given by $D_f=c^6[2^2\cdot 5^5c -7^7]$. By virtue of Theorem $\ref{1.1}(i)$, any prime $p$ dividing $c$  will not divide $[\Z_K : \Z[\theta]]$. Further in view of Corollary \ref{cor}, $\Z_K = \Z[\theta]$ if and only if  for each prime $p$ dividing $D_f$ and not dividing $c$, $p^2$ does not divide $(2^2\cdot 5^5c -7^7)$. Now  we calculate the exact value of $[\Z_K:\Z[\theta]]$ corresponding to some values of $c$.
 \begin{enumerate}
 	\item For $c = 2$, it can be easily seen that $|D_{f}| = 2^6\cdot 3^2\cdot 83\cdot 1069.$ In view of Theorem $\ref{1.1},$ $2,83,1069$ do not divide $[\Z_K:\Z[\theta]]$ and $3$ divides $[\Z_K:\Z[\theta]]$. Since $ D_{f} $=$[\Z_K : \Z[\theta]]^2\cdot d_K$, where $d_K$ is the discriminant of $K$, we see that $[\Z_K:\Z[\theta]]$ is $3$ in this case.
 	\item Consider $c = 5$. One can check that $|D_{f}| = 5^6\cdot 3\cdot 253681$. By Theorem $\ref{1.1},$ the primes $5, 3$ and $253681$ do not divide $[\Z_K:\Z[\theta]]$. So $[\Z_K:\Z[\theta]]=1$, and hence $\{1,\theta, \cdots, \theta^6\}$ is an integral basis of $K$ in this situation.
 	\item When $c = 7$, then one can see that the prime factorization of $|D_{f}|$ is given by $7^7\cdot 11^3\cdot 79$. Arguing as above, $7, 79$ do not divide $[\Z_K:\Z[\theta]]$  and $11$ divides $[\Z_K:\Z[\theta]]$. Therefore $[\Z_K:\Z[\theta]]=11$.
 \end{enumerate}
 \end{example}
\begin{example}
 Suppose that $c \equiv  1(\text {mod } 4)$  and consider an irreducible\footnote{Suppose there exists a prime factor $p$ of $c$ such that the highest power of $p$ dividing $c$ is $k$ and $\gcd(k,n) = 1$, then $f(x) = x^n+c(x^2+2x+1)\in\Z[x]$ will be irreducible over $\Q$ by Dumas irreduciblity criterion \cite{Dum}.} polynomial $f(x) = x^n + c(x^2+2x+1)$. Then we have $D_f=(-1)^{\frac{n(n-1)}{2}}c^{n-1}[(-1)^{n-1}n^n-4(n-2)^{n-2}c]$. By virtue of  Corollary \ref{cor}, $\Z_K = \Z[\theta]$ if and only if  for each prime $p$ dividing $D_f$, $p^2$ does not divide $c$ when $p\mid c$ and if $p\nmid c$, then $p^2\nmid D_f$. For example, if we take $n=5$ and $c\neq \pm 1$ is a squarefree integer, then $\{1,\theta,\theta^2,\theta^3,\theta^4\}$ is an integral basis of $K$ if and only if $(3125-108c)$ is squarefree. It can be verified that $(3125-108c)$ is squarefree when $c=-3, 5, 13, 17, 21$.
\end{example}

 \medskip
  \vspace{-3mm}

 \end{document}